\documentclass[12pt,reqno]{amsart}



 \usepackage{comment}
\headheight=6.15pt \textheight=8.75in \textwidth=6.5in
\oddsidemargin=0in \evensidemargin=0in \topmargin=0in
    
\usepackage{tikz}
\usepackage{latexsym}
\usepackage{graphicx}
\usepackage{amssymb}
 \graphicspath{{Figures/}}
 \usepackage{soul}

\usepackage{tikz}
\usepackage{caption}
\usepackage{subcaption}
\usetikzlibrary{
 decorations.pathmorphing,%
 decorations.markings,%
 decorations.pathreplacing,%
 decorations.text,%
 calc,%
 patterns,%
 shapes.geometric,%
 arrows,
 decorations.shapes,
 positioning,
 plotmarks,
 shadings,
 math,
 intersections
}

\renewcommand{\Re}{{\operatorname{Re}\,}}
\renewcommand{\Im}{{\operatorname{Im}\,}}

\newcommand{\sgn}{\operatorname{sgn}}

\renewcommand{\epsilon}{\varepsilon}

\newcommand{\I}{{\mathbf I}}

\newcommand{\R}{{\mathbb R}}

\newcommand{\e}{\epsilon}

\newcommand{\re}{\mathbb{R}}

\newcommand{\supp}{{\operatorname{supp\,}}}
\renewcommand{\phi}{\varphi}

\newcommand{\ep}{\varepsilon}

\newtheorem{theo}{Theorem}

\newtheorem{cor}{Corollary}[section]

\newtheorem{lem}[cor]{{ Lemma}}

\numberwithin{equation}{section}
\newcounter{remcounter}
\renewcommand{\theremcounter}{\arabic{remcounter}}

\newenvironment{rem}{%
\refstepcounter{remcounter}
\medskip

\noindent{\bf{Remark~\theremcounter.}}%
}%
{\\[.2em]}%
\renewcommand{\O}[1]{O_{#1}}

\usepackage{hyperref}

\definecolor{lGray}{RGB}{200, 200, 200}
\definecolor{yaizaColor}{RGB}{0, 153, 153}
\definecolor{certainty}{RGB}{64, 228, 198}
\definecolor{hope}{RGB}{228, 194, 64}

\definecolor{periodColor}{RGB}{255, 167, 105}
\definecolor{dark-green}{RGB}{135, 194, 130}
\tikzset{>=latex} 
\tikzset{font=\small}
\tikzset{mark size=1.5pt, mark options=thin}
\tikzset{pin distance=4pt,
 every pin edge/.style={<-, thin, shorten <= -2pt}}

\title[Lower bounds for Steklov Eigenfunctions]{Lower bounds for Steklov eigenfunctions}

\author{Jeffrey Galkowski}
\address{Department of Mathematics, University College London, London, United Kingdom}
\email{j.galkowski@ucl.ac.uk} 

\author{John A. Toth}
\address{Department of Mathematics and Statistics, McGill University, Montr\'eal, Canada}
\email{jtoth@math.mcgill.ca} 

\date{}

\begin{document}

\maketitle
\begin{abstract}  Let $(\Omega,g)$ be a compact, analytic Riemannian manifold with analytic boundary $\partial \Omega = M.$
We give $L^2$-lower bounds for Steklov eigenfunctions and their restrictions to interior hypersurfaces $H \subset \Omega^{\circ}$ in a geometrically defined neighborhood of $M$. Our results are optimal in the entire geometric neighborhood and complement the results on eigenfunction upper bounds in \cite{GaTo:19}. 
\end{abstract}

\section{Introduction} 
Let $(\Omega,g)$ be an $n+1$-dimensional, compact $C^{\infty}$ Riemannian manifold  with boundary $M$ and corresponding unit exterior normal $\nu$. By some abuse of notation, we also let $\nu$ denote a smooth vector field extension and $\gamma_{M}: C^0(\Omega) \to C^0(M)$ be the boundary restriction map.   Let ${\mathcal D}: C^{\infty}(M) \to C^{\infty}(M)$  be the associated Dirichlet-to-Neumann (DtN) operator defined by
\begin{equation} \label{DtN}
{\mathcal D} f :=  \gamma_{M} \partial_{\nu} u
\end{equation} 
where $u$ solves the Dirichlet problem
\begin{align} \label{dirichlet}
\Delta_{g} u(x) &= 0, \,\,\, x \in \Omega, &u(q) &= f(q), \,\,\, q \in M. \end{align}

The operator ${\mathcal D}$ is an ellptic, first order, self-adjoint pseudodifferential operator (see for example \cite[Section 7.11]{Tayl2}) with an $L^2$-normalized basis of eigenfunctions $\phi_{j}; j=1,2,....$ It is useful to work in the semiclasscial setting from the outset. Choosing $h^{-1} \in \text{Spec} \, {\mathcal D},$ the corresponding eigenfunction $\phi_h$ then satisfies the semiclassical eigenfunction equation
$$ h {\mathcal D}\phi_h = \phi_h.$$
The harmonic extension, $u_{h} \in C^{\infty}(\Omega),$ of a DtN eigenfunction $\phi_h$ is called a {\em Steklov eigenfunction.} Throughout the article, we will use the notation $u_h$ for these Steklov eigenfunctions which have $L^2$ normalized boundary restriction.

There has been a substantial amount of recent work devoted to the study of the asymptotic behaviour of the DtN eigenvalues and both DtN and Steklov eigenfunctions, including the asymptotics of eigenfunction nodal sets (see for example \cite{BLin,GP,GPPS,HL,PST,Sh,SWZ, Ze,Zh, Zhu} and references therein). 

For large eigenvalues, Steklov eigenfunctions possess both high oscillation inherited from the boundary DtN eigenfunctions and very sharp decay into the interior of $\Omega.$   As a consequence, even though Steklov eigenfunctions decay rapidly, the oscillation implies that the nodal sets have intricate structure. It has been conjectured \cite{GP} that the analogue of Yau's conjecture \cite{Yau,Yau2} for nodal  volumes holds in the Steklov case.  This was recently proved for real-analytic Riemann surfaces in \cite{PST}. In the case of smooth manifolds, the recent work~\cite{Decio:21a,Decio:21b} gives the best available polynomial upper and lower bounds on the nodal volume. Despite these bounds on the nodal volume, it is likely that a typical high energy Steklov eigenfunction exhibits regions of fixed sign with inner radius uniformly bounded from below~\cite{BrGa:20}.

  The question of decay of Steklov eigenfunctions into the interior of $M$ when $(M,g)$ is real analytic was first raised by Hislop--Lutzer \cite{HL} where they conjecture that the Steklov eigenfunctions decay into the interior as $e^{-d(x,\partial\Omega)/h}.$  In the special case where dim $\Omega =2$ and $\Omega$ is analytic, exponential decay with respect to $d(x,\partial\Omega)$ was proved in \cite{PST}, the case of general dimension and analytic $\Omega$ was handled in~\cite{GaTo:19} where the authors prove that 
  
  \begin{equation} \label{sup}
  \sup_{x \in \Omega_{\partial}(\ep_0)} |\partial_x^\alpha u_h(x)|e^{ d(x,\partial \Omega) - C_{sup}(\Omega) d^2(x,\partial \Omega) /h } \leq C_\alpha 
  \end{equation}
  where $\Omega_{\partial}(\ep_0)$ is a tubular neighbourhood of the boundary of width $\ep_0 = \ep_{0}(\Omega,M,g)>0$ and  $C_{\sup}$ is a constant depending on the second fundamental form of the boundary. Here, $\ep_0 >0$ is an $h$-independent positive constant that depends on the analyticity properties of the boundary and is difficult to quantify explicitly.

In this article, we consider the complementary question of lower bounds on Steklov eigenfunctions. As in the case of~\cite{GaTo:19}, we restrict our attention to the case of analytic $\Omega$ and $M$.  Our first result is  a {\em lower} bound for $L^2$ restrictions of eigenfunctions in a small $\ep_0$-neighbourhood of the boundary. In analogy with (\ref{sup}) we prove 

\begin{theo} \label{mainthm1}
 There exist  a neighbourhood, $\Omega_\partial(\ep_0)$ of $M = \partial \Omega$ and constants $C_j(\ep_0)>0; j=1,2$ such that for any connected component, $N$, of the boundary and any $\epsilon >0$ there are $C>0$ and $h_0>0$ such that for $h \in (0,h_0(\epsilon)]$ and $0\leq t\leq \e_0$, and $H_t:=\{x\,:\, d(x,N)=t\},$
  $$ e^{t+C_1(\ep_0) t^2/h} \| u_h \|_{L^2(H_t)} \geq Ce^{ - C_2(\ep_0) \, \epsilon/h}\|u_h\|_{L^2(N)}-C_1e^{-1/(hC_1)}\|u_h\|_{L^2(M)}.$$
\end{theo}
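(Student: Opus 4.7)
The plan is to construct an analytic parametrix for the harmonic extension in a fixed neighbourhood of $N$ and extract the lower bound from the ellipticity of its principal symbol. This is the lower-bound companion to the parametrix construction used to prove (\ref{sup}) in \cite{GaTo:19}.

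First I would work in boundary normal coordinates $(t,y)$ near $N$, with $t = d(x,N) \in [0,\ep_0)$ and $y \in N$, in which
$$\Delta_g = \partial_t^2 + E(t,y)\partial_t + \Delta_{H_t},$$
where $\{\Delta_{H_t}\}$ is an analytic family of Laplacians on $N$ (by analyticity of $(\Omega,g)$) reducing to the boundary Laplacian at $t=0$. The DtN equation $h\mathcal D \phi_h = \phi_h$ together with analytic elliptic regularity confines $\phi_h$ to a microlocal neighbourhood of the characteristic set $\{|\eta|_{g_N}=1\}$ of $h\mathcal D - 1$, modulo an $O(e^{-1/(Ch)})$ error in $L^2(N)$.

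Next I would build a complex WKB parametrix of the form
$$P(t)f(y) = \frac{1}{(2\pi h)^n}\int e^{i\Phi(t,y,\eta)/h}\,a(t,y,\eta;h)\,\hat f(\eta)\,d\eta,$$
where $\Phi$ solves the complex eikonal equation associated to $-\partial_t^2 - \Delta_{H_t}$ with $\Phi(0,y,\eta)=y\cdot\eta$, and $a$ is generated by the analytic transport hierarchy with $a|_{t=0}=1$. A direct expansion gives
$$\Im\Phi(t,y,\eta) = -t\,|\eta|_{g_N(y)} - C_1(y,\eta)\,t^2 + O(t^3),$$
with $C_1$ controlled by the second fundamental form of $N$. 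Analytic Borel resummation of the transport symbols---exactly as in \cite{GaTo:19}---upgrades this formal parametrix to an honest operator accurate modulo exponentially small error:
$$u_h(t,\cdot) = P(t)\phi_h + r_h(t,\cdot), \qquad \|r_h\|_{L^2(H_t)} \leq C e^{-1/(Ch)}\|u_h\|_{L^2(M)}.$$

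The lower bound itself then comes from evaluating $\|P(t)\phi_h\|_{L^2(H_t)}$ via Plancherel and stationary phase. Microlocalising $\phi_h$ onto the $\ep$-shell $\{\,||\eta|_{g_N}-1|\leq\ep\,\}$ (whose complement contributes at most $Ce^{-C_2(\ep_0)\ep/h}\|\phi_h\|_{L^2(N)}$, again by analytic elliptic regularity for $h\mathcal D - 1$), the amplitude of $P(t)$ is elliptic with modulus bounded below by $e^{-t/h - C_1(\ep_0)t^2/h}$ on this shell, which yields
$$\|P(t)\phi_h\|_{L^2(H_t)} \geq c\,e^{-t/h - C_1(\ep_0) t^2/h}\bigl(\|\phi_h\|_{L^2(N)} - Ce^{-C_2(\ep_0)\ep/h}\|\phi_h\|_{L^2(N)}\bigr).$$
Combining this with the parametrix error estimate and multiplying through by $e^{t/h + C_1(\ep_0) t^2/h}$ produces the stated inequality.

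The principal obstacle is establishing the parametrix with $O(e^{-1/(Ch)})$ accuracy uniformly in $\Omega_\partial(\ep_0)$: this requires Gevrey-$1$ control of the full asymptotic expansions of both $\Phi$ and $a$, and Borel summability that persists up to the fixed distance $\ep_0$ from $N$. The analyticity of $(\Omega,g,M)$ is indispensable here. A secondary difficulty is calibrating the microlocal cutoff at scale $\ep$ so that the off-characteristic contribution is captured precisely by the stated $e^{-C_2(\ep_0)\ep/h}$ loss rather than swamping the principal term; this is what forces the $\ep$-dependence to appear exactly as written.
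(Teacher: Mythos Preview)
Your outline shares the paper's ingredients---the analytic Poisson parametrix, microlocalization of $\phi_h$ to a neighbourhood of $S^*N$, and exponential-error bookkeeping---but diverges at the central step. The paper does \emph{not} try to bound $\|P(t)\phi_h\|_{L^2(H_t)}$ from below directly. Instead it constructs an explicit approximate left inverse $E_h$ for $\gamma_{H_t}P$ (a semiclassical FIO with phase $\langle x',\xi'\rangle-\psi(t,y',\xi')$ and a cutoff $\varphi(|\xi'|_g)$ to the unit shell), and proves in Lemma~\ref{l:itsAnInverse}, via complex stationary phase along a carefully chosen good contour, that $A:=E_h\gamma_{H_t}PS\varphi(|\xi'|_g)T$ is a genuine semiclassical pseudodifferential operator elliptic on $S^*N$. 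The estimate then runs in the \emph{easy} direction: $\|A\phi_h\|\leq \|E_h\|_{L^2\to L^2}\bigl(\|u_h\|_{L^2(H_t)}+O(e^{-1/Ch})\bigr)$, the elliptic bound $\|\phi_h\|_{L^2(N)}\leq C\|A\phi_h\|$ holds because $\operatorname{WF}_h(\phi_h)\subset S^*N$, and the operator norm $\|E_h\|$ is read off from $\sup_{|\eta'|_g\leq 1+\e}e^{\Im\psi(t,x,\eta)/h}$, which supplies the factor $e^{(t+C_1t^2)/h}$ together with the $\e$-loss.

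Your sentence ``the amplitude of $P(t)$ is elliptic \dots which yields $\|P(t)\phi_h\|\geq\cdots$'' is precisely where the paper's work lies, and ``Plancherel and stationary phase'' does not justify it. Symbol-level ellipticity of an FIO with complex phase does not give $L^2$ lower bounds by Plancherel: for $t>0$ the real part of $\Phi$ is no longer $y\cdot\eta$ (the $O(t^2)$ terms in the eikonal expansion have real parts), so distinct Fourier modes interact and there is no pointwise-in-$\eta$ estimate. To make your route rigorous you would have to show that the weighted operator $e^{(t+C_1t^2)/h}\gamma_{H_t}P$, cut off to the shell, is bounded below on $L^2$---and the cleanest way to do that is to build a parametrix for it, which is exactly the paper's $E_h$. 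Relatedly, your ``principal obstacle'' is misidentified: the $O(e^{-1/Ch})$-accurate Poisson parametrix is already supplied by \cite{SU} and used in \cite{GaTo:19}; the new content of Theorem~\ref{mainthm1} is the inverse construction and the verification (Lemma~\ref{l:itsAnInverse}) that the composition is an elliptic $\Psi$DO.
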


Here $\ep_0 = \ep_0(M,\Omega,g)>0$ is a possibly small constant (but independent of $h$)  that is the same in both the upper bounds (\ref{sup}) and lower bounds in Theorem \ref{mainthm1} and is difficult to quantify precisely.  

Our second result extends eigenfunction lower bounds to an  explicit {\em geometric} neighbourhood of the boundary.  Specifically, we use Carleman estimates to ``bootstrap" the local result in Theorem \ref{mainthm1} to the full geometric neighbourhood of the boundary.

To define the geometric tubular neighbourhood  more precisely, let $N$ be a connected component of $\partial \Omega$. We consider
the map $ \phi_N: N \times [0, r) \to \overline{\Omega}$ given by 
\begin{equation}
\label{e:phiDef} \phi_N (x, r) = \exp_{x} (- r \nu), \quad  r \in [0,r_0), \,\, x \in N,
\end{equation}
where $\exp$ is the Riemannian exponential map induced by the metric $g$ and $-\nu$ is the unit {\em interior} normal to $\partial \Omega$.
By the collar neighbourhood theorem, for sufficiently small $r_0>0.$ the map $\phi_N$  is a diffeomorphism onto its image $\phi_N( [0,r_0)).$ We let $r_{\max,N}$ be the maximal choice of $r_0$ with this property and set
\begin{equation} \label{geo}
\Omega_{N}(r_{\max,N}):= \phi ([0,r_{\max,N}) ). \end{equation}
We refer to $\Omega_{N}(r_{\max,N})$ as the {\em geometric} neighbourhood of the boundary component $N$. In the following, we sometimes abuse notation and just write $\Omega_{N}$ for $\Omega_{N}(r_{\max,N}).$ See Figure~\ref{f:domains} for a description of these domains for the annulus.

 \begin{theo} \label{mainthm2}
 Let $\Omega$ be an analytic manifold with analytic boundary, $M=\partial\Omega$, and  $\Omega_{N} \subset \overline{\Omega}$  (as in~\eqref{geo}) be the g Fermi neighbourhood of the connected component, $N$, of $M$ and $0<t<r_{\max,N}$. Then, for any tubular neighbourhood, $U_{H_t}$, of $H_t:=\varphi_N(N,t)$ and $\e>0$, there are $h_0>0$ and $C>0$ such that for $h \in (0,h_0],$
 $$ \| e^{ \psi_N(t) /h} \, u_h \|_{L^2(U_{H_t})} \geq C e^{-\e/h}(\|\,u_h\|_{L^2(N)}-e^{-1/Ch}\|u_h\|_{L^2(M)}),$$
 where 
 $$
 \psi_N(x_{n+1})=\int_0^{x_{n+1}} e^{\int_0^s Q_{\sup,N}(t)dt}ds,\qquad Q_{\sup,N}(t):=\sup \{ Q(t,x',\xi')\,:\, x'\in N,\, |\xi'|_{g_t(x')}=1\},
 $$
 $g_t$ is the metric induced on $H_t$, and $Q(t,x',\xi')$ is the second fundamental form on $H_t$ induced by the inward pointing normal.
 \end{theo}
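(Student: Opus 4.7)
The plan is to derive a semiclassical Carleman estimate for $-h^2\Delta_g$ with weight $e^{\psi_N^\delta/h}$ (a small convex perturbation of $e^{\psi_N/h}$) and use it to propagate the local lower bound of Theorem~\ref{mainthm1} from a thin $\ep_0$-collar of $N$ through the full geometric neighbourhood $\Omega_N(r_{\max,N})$. Since the Steklov eigenfunction $u_h$ is harmonic, the estimate applied with a spatial cutoff reduces to a three-ball/propagation inequality for $u_h$ across the Fermi hypersurfaces $\{x_{n+1}=t\}$, and combining the inner-ball lower bound from Theorem~\ref{mainthm1} with the outer-ball upper bound from~(\ref{sup}) produces a lower bound at the target height $t$.

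\textbf{Fermi setup and pseudoconvexity of $\psi_N$.} In the Fermi coordinates $(x',x_{n+1})$ adapted to $N$, which cover $\Omega_N(r_{\max,N})$ by construction, the metric reads $g=dx_{n+1}^2+g_{x_{n+1}}(x')$, the Fermi hypersurfaces are $H_t=\varphi_N(N,t)=\{x_{n+1}=t\}$, and the principal symbol of $-h^2\Delta_g$ is $p(x,\xi)=\xi_{n+1}^2+|\xi'|^2_{g_{x_{n+1}}(x')}$. A direct computation of the H\"ormander pseudoconvexity bracket $\frac{1}{2i}\{\overline{p_\psi},p_\psi\}$ for the conjugated symbol $p_\psi(x,\xi):=p(x,\xi+i\psi'(x_{n+1})\,dx_{n+1})$ on its characteristic set $\{\xi_{n+1}=0,\ |\xi'|^2_{g_t}=(\psi')^2\}$ reduces, after normalizing $\xi'$, to the scalar condition $\psi''(t)>Q(t,x',\xi'/|\xi'|_{g_t})\psi'(t)$. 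The ODE $\psi_N''=Q_{\sup,N}\psi_N'$ with $\psi_N'(0)=1$ is precisely the borderline solution that saturates this condition at $Q=Q_{\sup,N}$, and the convex perturbation $\psi_N^\delta(t):=\psi_N(t)+\delta t^2$, $\delta>0$, restores strict pseudoconvexity and yields the standard semiclassical Carleman estimate
$$ h\|e^{\psi_N^\delta/h} v\|_{L^2(V)}+\|e^{\psi_N^\delta/h} h\nabla v\|_{L^2(V)} \leq C\|e^{\psi_N^\delta/h} h^2\Delta_g v\|_{L^2(V)}, $$
for all $v\in C_c^\infty(V)$ with $V\Subset \Omega_N(r_{\max,N})$.

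\textbf{Cutoff and propagation.} Fix $t\in(0,r_{\max,N})$; choose $0<\ep_1<t<T<r_{\max,N}$ with $\ep_1$ small enough that Theorem~\ref{mainthm1} applies at $H_{\ep_1}$, $\delta_0>0$ with $\{|x_{n+1}-t|<\delta_0\}\subset U_{H_t}$, and a cutoff $\chi(x_{n+1})\in C_c^\infty([\ep_1/2,T+\delta_0])$ equal to $1$ on $[\ep_1,T]$. Apply the Carleman estimate to $v=\chi u_h$; harmonicity of $u_h$ gives $h^2\Delta_g v=[h^2\Delta_g,\chi]u_h$, a first-order differential operator in $\partial_{x_{n+1}}$ supported where $\chi'\neq 0$. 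Semiclassical elliptic regularity for $\Delta_g u_h=0$ absorbs the $h\partial_{x_{n+1}}u_h$-terms into $L^2$-norms on slightly enlarged collars, and the resulting inequality takes the form of a quantitative three-ball relation among $\|u_h\|$ on neighbourhoods of $H_{\ep_1}$, $H_t$, and $H_T$, with weight ratios determined by $\psi_N^\delta$.

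\textbf{Conclusion and main obstacle.} Theorem~\ref{mainthm1} provides a lower bound for $\|u_h\|_{L^2}$ on the collar of $H_{\ep_1}$, and the upper bound~(\ref{sup}) controls $\|u_h\|_{L^2}$ on the collar of $H_T$. Choosing $T$ sufficiently close to $r_{\max,N}$ so that the outer upper-bound contribution is either negligible or absorbed into the $e^{-1/(Ch)}\|u_h\|_{L^2(M)}$ error term, and optimizing $\ep_1$, $\delta$, and $\delta_0$, the three-ball inequality rearranges into the desired lower bound for $\|e^{\psi_N(t)/h} u_h\|_{L^2(U_{H_t})}$. The principal obstacle is the borderline nature of pseudoconvexity: the weight $\psi_N$ sits exactly at the threshold, forcing the convexification $\psi_N\to\psi_N^\delta$, and the $e^{-\e/h}$ loss in the statement is intrinsic to controlling the deterioration as $\delta\to 0$ together with the discrepancy $\psi_N^\delta(t)-\psi_N(t)$. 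A secondary technical point is the propagation of the $e^{-1/(Ch)}\|u_h\|_{L^2(M)}$ error inherited from Theorem~\ref{mainthm1}, which can be kept uniform in $h$ by taking $\ep_1$ bounded away from zero.
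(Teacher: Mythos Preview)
Your Carleman setup and the convexification of $\psi_N$ are fine (the paper perturbs multiplicatively via an $f_\delta$ in the exponent rather than your additive $\delta t^2$, but either works).  The gap is in the cutoff architecture and in where you invoke~\eqref{sup}.  With your cutoff $\chi\equiv 1$ on $[\ep_1,T]$ and $t\in(\ep_1,T)$, the Carleman inequality reads $\|e^{\psi/h}[h^2\Delta_g,\chi]u_h\|\gtrsim h^{1/2}\|e^{\psi/h}\chi u_h\|$, so the target collar $U_{H_t}$ sits on the \emph{right-hand} side (the region where $\chi=1$).  That is an \emph{upper} bound on $\|e^{\psi/h}u_h\|_{L^2(U_{H_t})}$ in terms of the commutator pieces near $\ep_1$ and $T$, the wrong direction; no rearrangement of a three-ball inequality $\|\cdot\|_{\text{mid}}\le C\|\cdot\|_{\text{in}}^\alpha\|\cdot\|_{\text{out}}^{1-\alpha}$ yields a lower bound on the middle from a lower bound on the inner together with an upper bound on the outer.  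Separately, the bound~\eqref{sup} is only valid in the analytic collar $\Omega_\partial(\ep_0)$, and the whole point of Theorem~\ref{mainthm2} is that $\ep_0$ may be much smaller than $r_{\max,N}$: there is no a~priori exponential decay estimate for $u_h$ available at $H_T$ when $T$ is close to $r_{\max,N}$, so your ``outer upper bound'' step cannot be carried out.

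The fix (and this is what the paper does) is to drop $T$ altogether and place the \emph{outer} edge of the cutoff at the target: take $\chi\equiv 1$ on $[2\ep,\,t-2\ep]$ and $\operatorname{supp}\chi\subset[\ep,\,t+2\ep]$.  Both known estimates are then applied \emph{near the boundary}, where they are actually valid: \eqref{sup} bounds the commutator term on $\{\ep<x_{n+1}<2\ep\}\subset\Omega_\partial(\ep_0)$ from above, while Theorem~\ref{mainthm1} bounds the \emph{interior} term from below on a slab $\{\tfrac{\ep_1}{2}<x_{n+1}<\ep_1\}\subset\{\chi=1\}\cap\Omega_\partial(\ep_0)$.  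Because $\psi_N$ matches the decay rate of $u_h$ near $N$ to second order, both contributions are $O(e^{C\ep^2/h})$ and the near-boundary commutator is absorbed into the right-hand side.  The only surviving commutator term is the one supported in $\{|x_{n+1}-t|<2\ep\}\subset U_{H_t}$, which furnishes the desired lower bound on $\|e^{\psi_N(t)/h}u_h\|_{L^2(U_{H_t})}$.  No interior upper bound is ever needed.
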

 \noindent The examples of the disk, cylinder, and annulus in Sections~\ref{s:e1}-\ref{s:e2} show that Theorem~\ref{mainthm2} is optimal.

 \begin{rem}
  Notice that, although the right hand sides of the estimates in Theorems~\ref{mainthm1} and~\ref{mainthm2} have an error $e^{-1/Ch}$ with a constant $C$ depending on the analyticity properties of $\Omega$ and $M$, these do not cause losses in the estimates when $\|u_h\|_{L^2(N)}\gg e^{-1/Ch}\|u_h|_{M}\|_{L^2}$. Since there are finitely many boundary components, there are \emph{always} boundary components where this is the case.  
  
  Furthermore, we may replace $N$ in Theorem~\ref{mainthm2} by a union of boundary components, $\tilde{N}:=\cup_{j=1}^L N_j$  by applying Theorem~\ref{mainthm2} for each $N_j$ if we replace $\psi_N$ by $\psi_{\tilde{N}}$. 
 \end{rem}

By Taylor expansion at the boundary $M = \{ x_{n+1} = 0 \},$
  \begin{equation} \label{taylor}
  \psi_N(x_{n+1}) = x_{n+1}  + \frac{Q_{\sup,N}(0)}{2} x_{n+1}^2 + O(x_{n+1}^3),
  \end{equation}
  where $Q_{\sup,N} (0)$ is the maximum of the second fundamental form along $N\subset \partial\Omega$. Thus, near the boundary, eigenfunction decay is given to first order by $x_{n+1} = d(x, M)$. However, when the boundary is strictly convex, the quadratic correction in (\ref{taylor}) is actually {\em negative} and so the rate of decay in our estimate may be faster than $e^{-d(x,M)/h}$. The simple example of the disc (see section \ref{s:e1}) shows that this extra decay \emph{does} occur. Likewise, when a boundary component is strictly concave, the quadratic correction is positive, producing a sub-linear rate of decay. This behavior can be seen in the example of the annulus (see section~\ref{s:e2}).


\begin{figure}

\centering
\begin{subfigure}[b]{.45\textwidth}
\centering
\includegraphics[width=\textwidth]{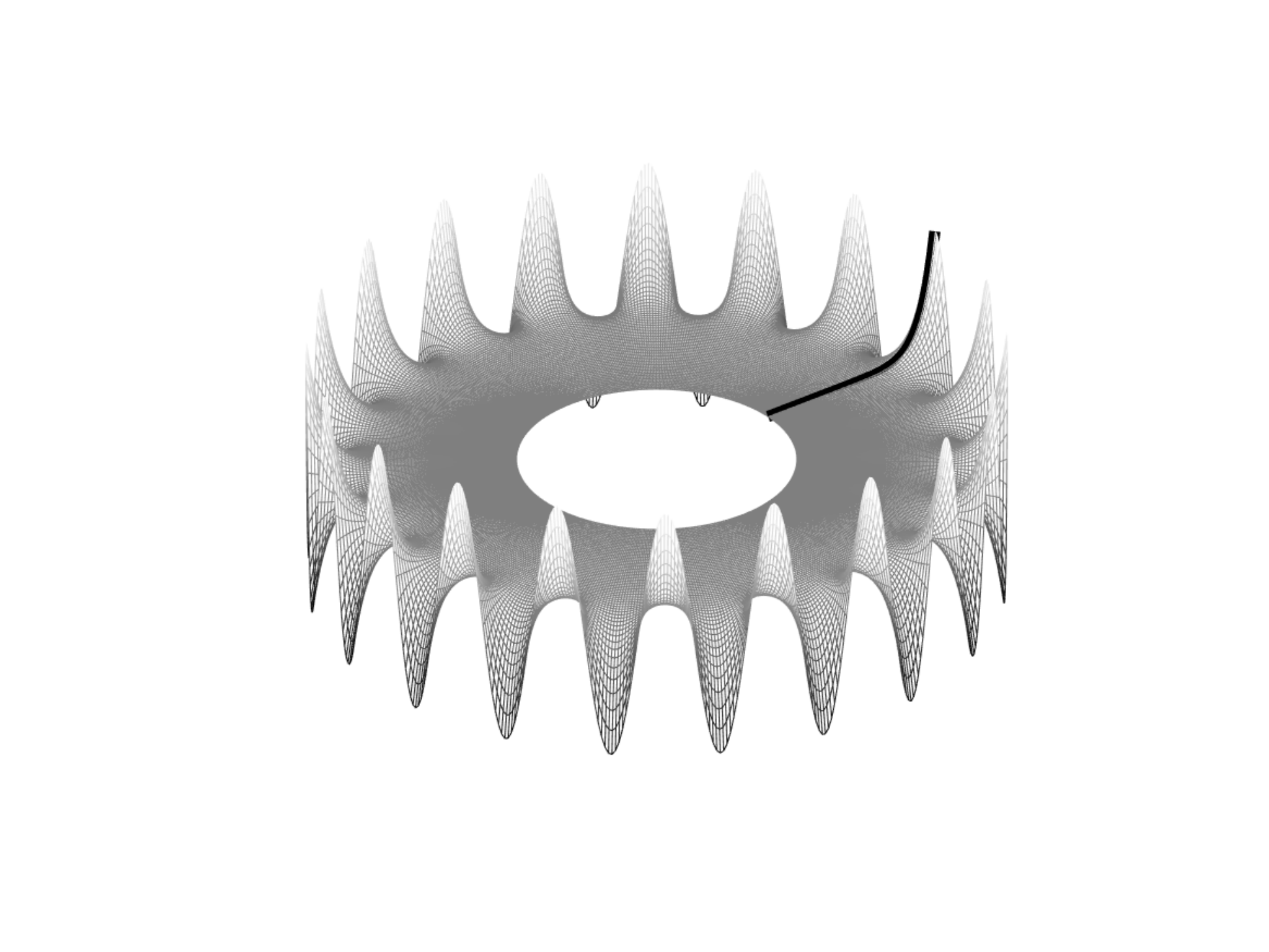}
\caption{$u_{\sigma_{20,1}}$}
\end{subfigure}
\begin{subfigure}[b]{.45\textwidth}
\centering
\includegraphics[width=\textwidth]{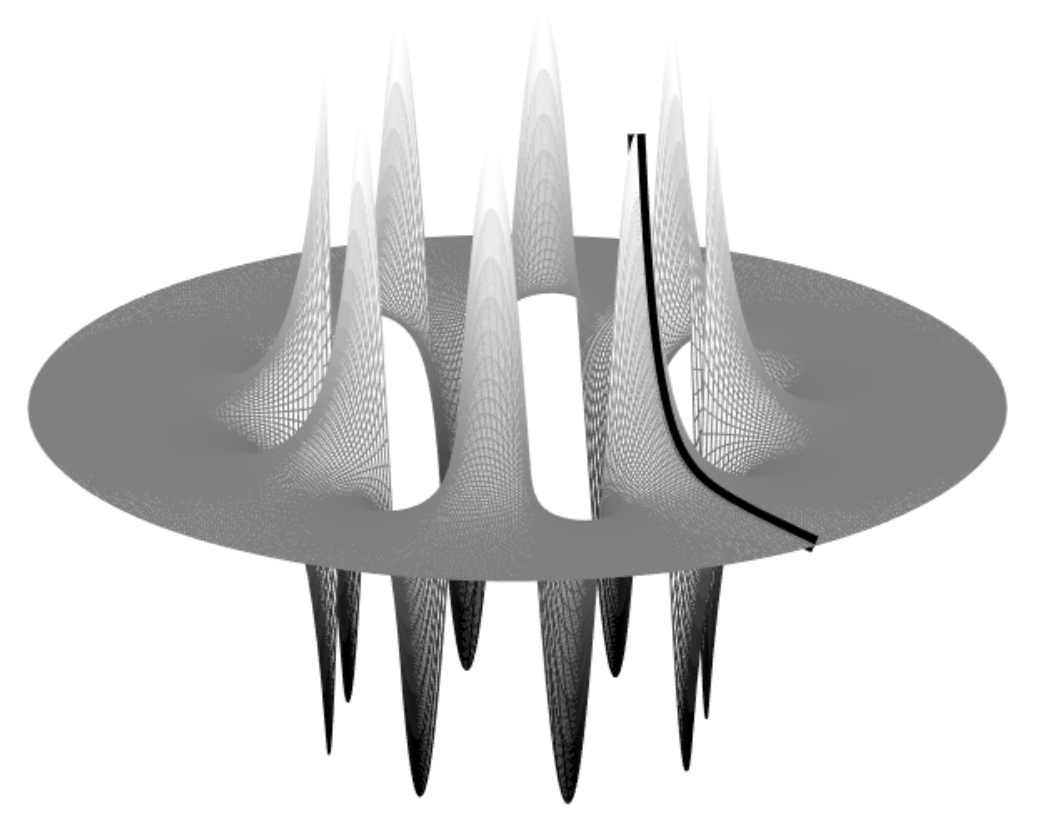}
\caption{$u_{\sigma_{8,2}}$}
\end{subfigure}
\caption{\label{f:annulusEigenfunctions}Steklov eigenfunctions with $\sigma\sim20$ on the annulus with $r_0=.4$. The black line shows the lower bound predicted by Theorem~\ref{mainthm2}. The labels $\sigma_{k,j}$ are as in~\eqref{e:sigmaKJ}.}
\end{figure}

Our final results concerns lower bounds of the $L^2$-restriction of eigenfunciton Cauchy data along $H \subset \Omega_{\partial}(r_{\max,\partial\Omega}).$ We recall that given a hypersurface $H \subset \Omega,$ the {\em Cauchy data} along $H$ is the pair
$$CD_H(h):= (u_h|_H, h\partial_{\nu} u_h |_H).$$

The lower bound in Theorem \ref{mainthm2} combined with a potential layer formula in the tube $U_H(\ep)$ allows us prove goodness for $CD_H(h)$ for hypersurfaces $H$ potentially far inside $\Omega$. 


\begin{theo} \label{mainthm3}
 Let $\Omega$ be an analytic manifold with analytic boundary, $M=\partial\Omega$ and $H_t:=\varphi_{\partial\Omega}(\partial\Omega,t)$. 
Then for $0<t<r_{\max,\partial\Omega}$ and $\epsilon >0$ there are $C(\ep)>0$ and $h_0(\ep)>0$ such that for $h \in (0,h_0(\epsilon)]$, 
$$ e^{\psi_{\partial\Omega}(t)/h} \, \big( \| u_h \|_{L^2(H_t)} + \| h \partial_{\nu} u_h \|_{L^2(H_t)} \big)  \geq C(\ep) e^{- \epsilon/h}\|u_h\|_{L^2(M)}.$$
\end{theo}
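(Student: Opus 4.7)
The plan is to combine Theorem~\ref{mainthm2} (applied to the full boundary $M$) with a potential-layer representation that controls the $L^2$ mass of $u_h$ in a thin Fermi tube around $H_t$ by the Cauchy data $\bigl(u_h|_{H_t},\,h\partial_\nu u_h|_{H_t}\bigr)$.

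First, apply Theorem~\ref{mainthm2} with $\tilde N = M = \partial\Omega$ (via the Remark following that theorem). This yields, for any $\epsilon>0$ and all sufficiently small $h$, a tubular neighborhood $U_{H_t}$ of $H_t$ such that
$$e^{\psi_{\partial\Omega}(t)/h}\,\|u_h\|_{L^2(U_{H_t})} \ge C\,e^{-\epsilon/h}\,\|u_h\|_{L^2(M)},$$
where I have absorbed the $e^{-1/(Ch)}\|u_h\|_{L^2(M)}$ error into the main term. It therefore suffices to establish a \emph{reverse trace inequality}: for every $\epsilon>0$ one can choose $U_{H_t}$ sufficiently thin that
$$\|u_h\|_{L^2(U_{H_t})} \le C\,e^{\epsilon/h}\bigl(\|u_h\|_{L^2(H_t)} + \|h\partial_\nu u_h\|_{L^2(H_t)}\bigr) + C\,e^{-1/(Ch)}\|u_h\|_{L^2(M)}.$$

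To prove the reverse trace inequality, I would work in Fermi coordinates $(x',r)$ around $H_t$, in which the harmonic equation becomes $\partial_r^2 u_h = -\mathcal{L}(r)\,u_h$ with $\mathcal{L}(r)$ a tangential second-order elliptic operator on $H_r$ plus lower-order terms. A Volterra iteration in $r$ (equivalently, a potential-layer representation using a Green's function for $\partial_r^2 + \mathcal{L}(t)$ on a thin strip) expresses $u_h(x',r)$ in terms of the Cauchy data on $H_t$. Because the Cauchy problem for the Laplacian is ill-posed, high tangential frequencies propagate exponentially, so I would split $u_h|_{H_t}$ at tangential frequency $(1+\eta)/h$. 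The low-frequency piece is controlled by a Gronwall-type estimate that produces at most a factor $e^{\delta(1+\eta)/h}$ on a tube of width $\delta$, which is $\le e^{\epsilon/h}$ for $\delta$ small depending on $\epsilon$ and $\eta$. The high-frequency piece is exponentially small in $1/h$: by the analyticity of $\Omega$, $M$, and the DtN operator $\mathcal{D}$, the analytic wavefront set of $\phi_h = u_h|_M$ lies in $\{|\xi'|_g = 1\}$, and its harmonic extension to $H_t$ suppresses any mass outside this set by $e^{-1/(Ch)}\|u_h\|_{L^2(M)}$.

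Combining the lower bound from Theorem~\ref{mainthm2} with the reverse trace inequality and taking $h$ small enough to absorb the $e^{-1/(Ch)}$ errors yields
$$e^{\psi_{\partial\Omega}(t)/h}\bigl(\|u_h\|_{L^2(H_t)} + \|h\partial_\nu u_h\|_{L^2(H_t)}\bigr) \ge C'\,e^{-2\epsilon/h}\,\|u_h\|_{L^2(M)};$$
since $\epsilon>0$ was arbitrary, Theorem~\ref{mainthm3} follows. The hard part will be the reverse trace inequality: the desired $e^{\epsilon/h}$ loss (as opposed to a worse exponential factor) forces one to thin the tube and cut off high tangential frequencies sharply, and controlling the high-frequency remainder by $e^{-1/(Ch)}$ uses the analyticity hypothesis essentially via analytic microlocal analysis of the Poisson extension from $M$ to an interior hypersurface.
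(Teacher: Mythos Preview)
Your overall strategy---combine Theorem~\ref{mainthm2} with a bound on the tube mass in terms of Cauchy data on $H_t$---matches the paper, but the ``reverse trace inequality'' you isolate as the hard step is much easier than you make it. The paper does \emph{not} propagate Cauchy data from $H_t$ into a thin tube via the (ill-posed) normal evolution; instead it works on the \emph{interior} domain $U_{H_t}$ bounded by $H_t$, takes the Dirichlet Green's function $G$ for $-\Delta_g$ on $U_{H_t}$, and writes, for $x$ in a compact $U_{H_t,\ep}\Subset U_{H_t}$ with $d(U_{H_t,\ep},H_t)\geq \ep/2$,
\[
h\,u_h(x)=\int_{H_t} G(x,s)\,h\partial_\nu u_h(s)\,d\sigma(s)-h\int_{H_t}\partial_{\nu(s)}G(x,s)\,u_h(s)\,d\sigma(s).
\]
Since $G$ is smooth off the diagonal, Cauchy--Schwarz gives
\[
h^2\|u_h\|_{H^1_h(U_{H_t,\ep})}^2\leq C(\ep)\bigl(\|u_h\|_{L^2(H_t)}^2+\|h\partial_\nu u_h\|_{L^2(H_t)}^2\bigr),
\]
a \emph{polynomial} loss in $h$, with no $e^{\ep/h}$ factor, no frequency splitting, and no analytic microlocal analysis. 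One then applies Theorem~\ref{mainthm2} at the slightly deeper level set $H_{t+\ep}\subset U_{H_t,\ep}$ to lower bound the left side, and the $\ep$-loss in the exponential comes only from $\psi_{\partial\Omega}(t+\ep)-\psi_{\partial\Omega}(t)=O(\ep)$.

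The practical payoff is that the paper's argument is elementary and uses no analyticity at this stage. Your route, by contrast, requires controlling the high-frequency part of $u_h|_{H_t}$ by $e^{-1/(Ch)}\|u_h\|_{L^2(M)}$ for $H_t$ possibly far from the boundary; this analytic wavefront bound is only established in the paper on the small collar $\Omega_\partial(\ep_0)$ (cf.\ \eqref{sup} and Section~\ref{local}), so you would need an additional propagation-of-analyticity argument to justify it at depth $t$. That is not impossible, but it is a genuine extra step you have not supplied, and it is entirely avoided by the interior Green's formula.
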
\
Theorems~\ref{mainthm2} and~\ref{mainthm3} are optimal in a sense made precise in the next section.

\subsection{Organization of the paper}

The proof of Theorem \ref{mainthm1} follows first from the existence of a parametrix for the Poisson kernel modulo analytic errors, and second, from the construction of an approximate inverse for this operator valid at frequencies $\lesssim h^{-1}$.  However, the parametrix construction is only valid in a collar of radius $\ep_0 = \ep_0(M,\Omega,g)$ that while $h$-independent, is possibly smaller than $r_{\max,N}.$ The proof of Theorem~\ref{mainthm1} is taken up in section \ref{local}

The proof of  Theorem \ref{mainthm2} is given in section \ref{s:carleman}.  Here, we use the local result in Theorem \ref{mainthm1} as a control estimate and  use Carleman estimates to extend the lower bound in $\Omega_{\partial}(\ep_0)$ to the full geometric neighbourhood $\Omega_{N}(r_{\max,N})$ of the boundary. Finally Section~\ref{upper} applies a layer potential formula together with Theorem~\ref{mainthm2} to prove Theorem~\ref{mainthm3}.

\bigskip

\noindent\textsc{Acknowledgements:} J.G. is grateful to the EPSRC for support under Early Career Fellowhip EP/V001760/1. J.T. was
partially supported by NSERC Discovery Grant \# OGP0170280 and by the French National Research Agency project Gerasic-ANR- 13-BS01-0007-0.


\section{Examples}
\label{s:ex}
\subsection{The Disk}
\label{s:e1}
Let $\Omega=B(0,R)\subset \re^2$. Then the Steklov eigenvalues are precisely $\sigma=0,\frac{1}{R},\frac{2}{R}\dots $ with corresponding Steklov eigenfunctions given by 
\begin{equation}
\label{e:e1}u^{\pm}_{k}=\frac{1}{\sqrt{2\pi R}R^k}r^{k}e^{\pm i{k}\theta},\quad \sigma =\frac{{k}}{R}.
\end{equation}
In particular, letting $h=\sigma^{-1}={k}^{-1}R$, 
$$u^{\pm}_{k}=\frac{1}{\sqrt{2\pi R}}e^{[R\log (1-(R-r)/R))]/h}e^{i\theta R/h}.$$

Now, we recall that the metric in Fermi normal coordinates relative to $\partial B(0,R)$ (i.e. with $x_{2}=R-r$) is given by
$$
\xi_{2}^2+\frac{1}{(R-x_{2})^2}\xi_\theta^2,
$$
and hence, the metric induced on $H_{t}$ and second fundamental form on $H_t$ are given by 
$$
|\xi_\theta|_{g_t}^2= \frac{\xi_\theta^2}{(R-t)^2},\qquad Q(t,\theta,\xi_\theta)=\frac{\xi_\theta^2}{(R-t)^3}.
$$
In particular, 
$$
Q_{\sup}(t)= \frac{1}{R-t},
$$
and therefore, 
$$
\psi(x_{2})=\int_0^{x_{2}}e^{\int_0^sQ_{\sup}(t)dt}ds= R\log(1-\tfrac{x_{2}}{R}).
$$

Undoing, the change of variables and applying Theorem~\ref{mainthm3} to obtain a lower bound, we have that
$$
C(\e)e^{-\e/h}\leq e^{R\log(1-\tfrac{R-r_0}{R})/h}[\|u^{\pm}_k\|_{L^2(r=r_0))}+\|h\partial_r u^{\pm}_k\|_{L^2(r=r_0)}]\leq C.
$$
In particular, the exponential weight  in Theorems~\ref{mainthm2} and~\ref{mainthm3} is optimal.

The case of spheres in higher dimensions is nearly identical if we replace $e^{\pm ik\theta}$ by a spherical harmonic.

\subsection{Cylinders}
Let $(M,g)$ be a real analytic manifold of dimension $n$ without boundary and $\Omega=(-1,1)_s\times M_x$ with metric $ds^2+g(x).$ Then 
$$\Delta_\Omega=\partial_s^2+\Delta_M.$$
Let $\varphi_{k}$ be an orthonormal basis for $L^2(M)$ with 
$$(-\Delta_M-\lambda_{k}^2)\varphi_{k}=0.$$
Then the Steklov eigenfunctions are given by 
$$u_h(s,x)=\frac{\cosh(\lambda_{k}s)}{\cosh(\lambda_{k})}\varphi_{k}(x),\qquad v_h(x,s)=\frac{\sinh(\lambda_{k}t)}{\sinh(\lambda_{k})}\varphi_{k}(x)$$
with Steklov eigenvalues $\sigma_{k}=\lambda_{k}\tanh(\lambda_{k})$ and $\sigma'_{k}=\lambda_{k}\coth(\lambda_{k})$ respectively. Notice that  
$$\cosh(x)=\frac{1}{2}e^{|x|}+O(e^{-|x|}),\qquad \sinh(x)=\frac{\sgn(x)}{2}e^{|x|}+O(e^{-|x|}),$$
and
$$\sigma_{k}=\lambda_{k}(1+O(e^{-\lambda_{k}})).$$

It is easy to see that $Q_{\sup}(s)\equiv 0$ and hence, taking $N_L=\{s=-1\}$, we have $\psi_{N_L}(s)=1+s$, and taking $N_R=\{s=1\}$, we have $\psi_{N_R}=1-s$. Combining the lower bounds from Theorem~\ref{mainthm2} applied with $N_L$ and $N_R$, we obtain optimal lower bounds on $M$. Similarly, we obtain optimal lower bounds with an application of Theorem~\ref{mainthm3}, but this time the hypersurface is given by $H_t=\{s=-1+t\}\sqcup \{s=1-t\}$ and the Theorem is valid for $0<t<1$. 

\begin{rem}
Notice that a cylinder has the unusual feature that there are Steklov eigenfunctions with non-negligible mass on multiple boundary components. This is why one must apply Theorem~\ref{mainthm2} twice (once from the left hypersurface and once from the right) to obtain the correct lower bounds.
\end{rem}

\subsection{The Annulus}
\label{s:e2}
Now, consider $B(0,1)\setminus B(0,r_0)\subset \re^2$. Then a simple computation shows that the Steklov eigenvalues are the roots of 
$$p_{k}(\sigma)=\sigma^2-\sigma {k}\left(\frac{1+r_0}{r_0}\right)\left(\frac{1+r_0^{2{k}}}{1-r_0^{2{k}}}\right)+\frac{{k}^2}{r_0},\quad {k}=0,1,\dots$$
with corresponding eigenfunctions
\begin{equation}
\label{e:e2}
u^{\pm}_\sigma(r,\theta)=C_{{k},\sigma}e^{\pm ik\theta}\left(r^{k}+\frac{{k}-\sigma}{{k}+\sigma}r^{-{k}}\right).
\end{equation}
See Figure~\ref{f:annulusEigenfunctions} for graphs of two such eigenfunctions.

It is easy to show that the roots of $p_n(\sigma)$ have 
\begin{equation}
\sigma_{{k},1}={k}+O({k}r_0^{2{k}}),\quad \sigma_{{k},2}=\frac{{k}}{r_0}+O({k}r_0^{2{k}}).
\label{e:sigmaKJ}
\end{equation}
Then, 
\begin{gather*} 
u^{\pm}_{\sigma_{{k},1}}=\frac{1}{\sqrt{2\pi}}e^{\pm i{k}\theta}(r^{k}+O(r_0^{2k})r^{-k}),\quad u^{\pm}_{\sigma_{{k},2}}=\frac{1}{\sqrt{2\pi r_0}}r_0^{k}e^{\pm i{k}\theta}(r^{-k}+O(1)r^k).
\end{gather*}
The case of $u_{\sigma_{{k},1}}$ is identical to that for the disk when $r>r_0+\e$, so we focus on $u_{\sigma_{{k},2}}$. Let $h=\sigma_{{k},2}^{-1}=r_0{k}^{-1}+O(e^{-c{k}}).$ Then, for $r<1-\e$,
$$|u^{\pm}_{\sigma_{{k},2}}(r,\theta)|\geq\frac{1}{\sqrt{2\pi r_0}}e^{-r_0\log[1+ (r-r_0)/r_0]/h}(1+O(e^{-c/h})).$$
Using exactly the same computation as for the interior of the disk, one again sees that the lower bound in Theorem~\ref{mainthm2} is optimal for $u_{\sigma_{k,2}}$. Indeed, $\|u_{\sigma_{k,2}}\|_{L^2(\partial B(0,r_0))}=1+e^{-c/h}$ and $\psi(r)=r_0\log(1+(r-r_0)/r_0).$

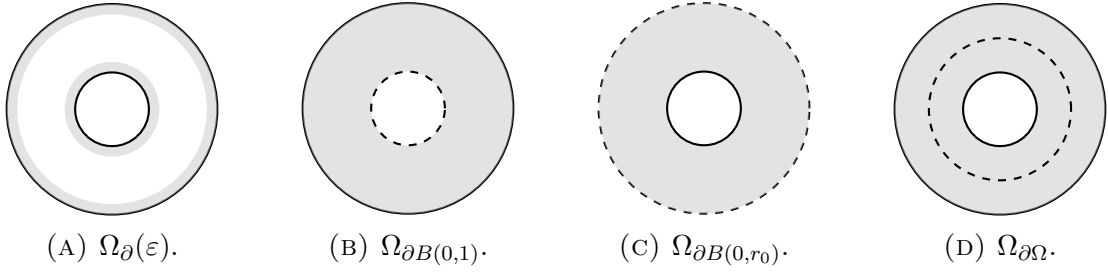
\begin{figure}
\centering
\begin{subfigure}[b]{.23\textwidth}
\centering
\begin{tikzpicture}
\begin{scope}[scale=.7]
\draw[thick] (0,0) circle (2);
\fill[fill=lGray,opacity=.5] (0,0) circle (2);
\fill[fill=white](0,0) circle(1.8);
\fill[fill=lGray,opacity=.5] (0,0) circle (.9);
\draw[fill=white,thick](0,0)circle (.7);
\end{scope}
\end{tikzpicture}
\caption{$\Omega_{\partial}(\e)$.}
\end{subfigure}
\begin{subfigure}[b]{.23\textwidth}
\centering
\begin{tikzpicture}
\begin{scope}[scale=.7]
\draw[thick] (0,0) circle (2);
\fill[fill=lGray,opacity=.5] (0,0) circle (2);
\draw[fill=white,thick,dashed](0,0)circle (.7);
\end{scope}
\end{tikzpicture}
\caption{$\Omega_{\partial B(0,1)}$.}
\end{subfigure}
\begin{subfigure}[b]{.23\textwidth}
\centering
\begin{tikzpicture}
\begin{scope}[scale=.7]
\draw[thick,dashed] (0,0) circle (2);
\fill[fill=lGray,opacity=.5] (0,0) circle (2);
\draw[fill=white,thick](0,0)circle (.7);
\end{scope}
\end{tikzpicture}
\caption{$\Omega_{\partial B(0,r_0)}$.}
\end{subfigure}
\begin{subfigure}[b]{.23\textwidth}
\centering
\begin{tikzpicture}
\begin{scope}[scale=.7]
\draw[thick] (0,0) circle (2);
\fill[fill=lGray,opacity=.5] (0,0) circle (2);
\draw[fill=white,thick](0,0)circle (.7);
\draw[thick,dashed](0,0) circle (1.35);
\end{scope}
\end{tikzpicture}
\caption{$\Omega_{\partial \Omega}$.}
\end{subfigure}
\caption{\label{f:domains}Regions of applicability for Theorems~\ref{mainthm1},~\ref{mainthm2}, and~\ref{mainthm3} for $\Omega=B(0,1)\setminus \overline{B(0,r_0)}$. The relevant regions are shaded in gray with dashed lines not included in the region.}
\end{figure}

\section{Lower bounds sufficiently close to the boundary} \label{local}

 

The main goal of this section is the proof of Theorem \ref{mainthm1}. As we already indicated in the introduction, here $\ep_0 = \ep_0(M,\Omega,g)>0$ is a possibly small constant that depends on the analyticity properties of $M.$ As such, it is difficult to quantify. 

\subsection{Notation for pseudodifferential operators}
Below, we will need notation for semiclassical pseudodifferential operators. We say that \emph{$a\in C^\infty(T^*M)$ is a symbol of order $k$}, and write $a\in S^k(T^*M)$, if 
$$
|\partial_x^\alpha\partial_\xi^\beta a(x,\xi)|\leq C_{\alpha\beta}\langle \xi\rangle^{k-|\beta|},\qquad \langle \xi\rangle =(1+|\xi|^2)^{1/2}.
$$
Note that, $a$ may implicitly depend on the small parameter $h$. We also define the set of semiclassical pseudodifferential operators of order $k$, $\Psi^k(M)$ as in~\cite[Chapter 14]{Zw} or~\cite[Appendix E]{DyZw:19}. Note, in particular, that semiclassical pseudodifferential operators of order $-\infty$ are smoothing, but their norms do not vanish as $h\to 0$.  We also define the elliptic set of a pseudodifferential operator as in~\cite[Definition E.31]{DyZw:19}.

\subsection{Analytic symbols}

In this section, we will need the notion of a classical analytic symbol, which we recall from~\cite{Sj:82} (see also~\cite{SU}). We say that $a$ is \emph{classical analytic of order $k$} and write $a\in S^k_{cl,a}$ if there exist $C_0>0$ and functions $a_j$ analytic on a fixed neigbhorhood of $T^*M\setminus \{0\}$, homogeneous degree $j$, satisfying
$$
\Big|a_j\Big(x,\frac{\xi}{|\xi|}\Big)\Big|\leq C_0^{j+1}(j+1)!,
$$
and for every $C_1>0$ large enough, there is $C_2>0$ such that 
$$
\Big| a(x,\xi)-\sum_{0\leq j\leq |\xi|/C_1}a_{k-j}(x,\xi)\Big|\leq C_2e^{-|\xi|/C_2},\qquad |\xi|\geq 1.
$$
The key fact that we will use about such symbols is that, after rescaling $\xi\to \xi/h$, it is possible to deform contours away from $|\xi|=0$ modulo errors of order $e^{-1/Ch}$. 

We also recall the notion of a semiclassical, classical analytic symbol. We say that $a\in S^{k}_{h,cl,a}$ provided there are $a_j\in S^{k-j}(T^*M)$, $h$ independent and analytic in a conic neighborhood of $T^*M$, and $C_0$, $C>0$ such that 
$$
\begin{gathered}\partial_{x}^{l_1}\partial_{\xi}^{l_2}\bar{\partial}_{(x,\xi)}a=O_{l_1,l_2}(e^{-\langle \xi\rangle /Ch}),\\
\Big|a-\sum_{0\leq j\leq |\xi|/C}h^ja_j(x,\xi)\Big|\leq Ce^{-\langle \xi\rangle/Ch},\qquad |a_j|\leq C_0 C^jj!\langle \xi\rangle^{k-j}.
\end{gathered}
$$
Contours can, again modulo errors of the form $e^{-1/Ch}$, be readily deformed when this type of symbol is involved. 

\subsection{A geometric FBI transform}
We also recall a particular Fourier-Bros-Iagolnltzer (FBI) transform on $M$. Define the operator $T:C^\infty(M)\to C^\infty(M)$ by 
\begin{equation}
\label{e:FBI}
Tu(x,\xi)=\frac{1}{(2\pi h)^{3n/4}}\int e^{i(\langle \exp_{y}^{-1}(x),\xi\rangle +i\frac{\langle \xi\rangle}{2} d(x,y)^2)/h}a(x,\xi,y)u(y)dy,
\end{equation}
where $a\in S_{h,cla}^{n/4}$, which is uniformly bounded from $L^2(M)\to L^2(T^*M)$, and has a left inverse $S:L^2(T^*M)\to L^2(M)$ given by 
\begin{equation}
\label{e:FBI2}
Sv(x)=\frac{1}{(2\pi h)^{3n/4}}\int e^{-i(\langle \exp_{x}^{-1}(y),\xi\rangle -i\frac{\langle \xi\rangle}{2} d(x,y)^2)/h}b(x,\xi,y)v(x,\xi)dxd\xi,
\end{equation}
for some $b\in S_{h,cla}^{n/4}$, which is also uniformly bounded. As in~\cite[Theorem 2]{GaTo:19}, the FBI transform and its left inverse will be useful when localizing modulo exponential errors.

\subsection{Preliminaries on the Poisson Operator}

Let $P: C^{\infty}(\partial \Omega) \to C^{\omega}(\Omega)$  be a parametrix for the Poisson operator modulo analytic errors of the form
\begin{equation}
\label{poissonkernel}
[Pf](x)=\frac{1}{(2\pi )^{n}}\int e^{i(\psi(x_{n+1},x',\xi')-\langle y',\xi'\rangle)}a(x_{n+1},x',\xi')f(y')dy'd\xi'.
\end{equation}
that is, there is $K_a$ an operator with analytic kernel such that 
$$
\Delta_g (P+K_a)=0 \text{ in }\Omega\qquad (P+K_a)|_{\partial \Omega}=f. 
$$
Such an operator exists by~\cite{SU}  (see also~\cite{Le:18,St:14,Ze:12,Gu:97}). In addition, $a$ is classical analytic of order $0$ and $\psi\in C^\infty([0,\epsilon)\times T^*M\setminus\{0\}) $ satisfies, 
\begin{equation}
\label{e:poissonPhase}
\partial_{x_{n+1}}\psi-i\sqrt{r(x,\partial_{x'}\psi)}=0,\qquad \psi(0,x',\xi')=\langle x',\xi'\rangle,
\end{equation}
where, in Fermi normal coordinates, the symbol of $-\Delta_g$ is $\xi_n^2+r(x,\xi')$.

Let $t>0$ and define the smooth hypersurface
$$
H_t:=\{(x',t)\mid x'\in \partial \Omega\}
$$
 In the following, we identify $C^{\infty}(\partial \Omega)$ with $C^{\infty}(H_t)$ under the diffeomorphism $\partial \Omega \ni x' \mapsto (x',t) \in H_t.$

Let $\varphi\in C_c^\infty(0,\infty)$ with $\varphi(x)\equiv 1$ near {$\{x\equiv 1\}$} consider the family of operators   $E_h :L^2(\partial \Omega) \to L^2(\partial \Omega)$ given by
\begin{equation}
\label{e:defInverse}
[E_hu](x')=\frac{1}{(2\pi h)^{n}}\int e^{i(\langle x',\xi'\rangle -\psi(t,y',\xi'))/h}\varphi(|\xi'|_{g(x')})u(y')dy'd\xi'.
\end{equation}
Let also $\gamma_H$ denote the restriction operator from $\Omega$ to $\{H_t\}$.
\begin{rem}
Note that, because we include the compactly supported cutoff $\varphi$ in the amplitude of the integral defining $E_h$, $E_h$ is well-defined as an operator on $L^2(\partial\Omega)$. It would, however, be possible to define a left inverse for $P$ acting on sufficiently analytic functions as in e.g.~\cite{Gu:97}, but this is not necessary here.
\end{rem}

\begin{lem}
\label{l:itsAnInverse}
Let $T$ and $S$ be the FBI transform and its left inverse from~\eqref{e:FBI} and~\eqref{e:FBI2}.There is $\ep_0>0$ such that for $0<t<\ep_0$,  $A_H:=E_h \gamma_H PS\varphi(|\xi'|_g)T\in \Psi^{-\infty}(\partial\Omega)$ .    Moreover, $A$ is elliptic on   
$$
\{ (x',\xi')\,:\,|\xi'|_{g(0,x')}=1\}.
$$ 
\end{lem}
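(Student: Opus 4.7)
The plan is to express each factor of $A_H$ as a semiclassical Fourier integral operator and then compose them via stationary phase. First, rescaling $\xi'\to\xi'/h$ in~\eqref{poissonkernel} and using that $\psi$ is homogeneous of degree one in $\xi'$ (by the eikonal equation~\eqref{e:poissonPhase}) together with the fact that each term $a_{-j}$ in the classical analytic expansion of $a$ is homogeneous of degree $-j$, this rewrites $P$, modulo $O(e^{-1/Ch})$ errors, as
$$
Pf(x)=\frac{1}{(2\pi h)^n}\int e^{i(\psi(x_{n+1},x',\xi')-\langle y',\xi'\rangle)/h}b(x_{n+1},x',\xi';h)\,f(y')\,dy'\,d\xi',
$$
with $b\in S^0_{h,cl,a}$ whose principal symbol equals $a_0$. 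Meanwhile, as in~\cite[Theorem 2]{GaTo:19}, $S\varphi(|\xi'|_g)T$ is a semiclassical pseudodifferential operator with principal symbol $\varphi(|\xi'|_{g(x')})$, and because $\varphi$ has compact support in $(0,\infty)$, it lies in $\Psi^{-\infty}(\partial\Omega)$.

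Second, I would compose $E_h$ with $\gamma_HP$ via stationary phase. Setting $x_{n+1}=t$ in $P$ and combining with~\eqref{e:defInverse}, the kernel of $E_h\gamma_HP$ is an oscillatory integral in $(z',\xi',\eta')$ with phase
$$
\Phi=\langle x',\xi'\rangle-\psi(t,z',\xi')+\psi(t,z',\eta')-\langle y',\eta'\rangle
$$
and amplitude $\varphi(|\xi'|_{g(x')})\,b(t,z',\eta';h)$. The critical equations in $(z',\xi')$ are $x'=\partial_{\xi'}\psi(t,z',\xi')$ and $\partial_{z'}\psi(t,z',\xi')=\partial_{z'}\psi(t,z',\eta')$. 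At $t=0$ the phase $\psi=\langle z',\xi'\rangle$ reduces these to $z'=x'$, $\xi'=\eta'$, a non-degenerate critical point. By the implicit function theorem there is $\e_0>0$ such that for every $t\in[0,\e_0)$ the system has a unique non-degenerate critical point $(z'_*(x',\eta',t),\eta')$, with critical value $\Phi_*=\langle x'-y',\eta'\rangle$. Stationary phase then shows $E_h\gamma_HP\in\Psi^{-\infty}(\partial\Omega)$ with principal symbol
$$
\sigma_0(x',\eta')=\varphi(|\eta'|_{g(x')})\,a_0(t,z'_*(x',\eta',t),\eta')\,|\det H|^{-1/2},
$$
where $H$ is the $(z',\xi')$-Hessian of $\Phi$ at the critical point.

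Third, composition with $S\varphi(|\xi'|_g)T$ keeps the operator in $\Psi^{-\infty}(\partial\Omega)$ and multiplies the principal symbol by $\varphi(|\xi'|_{g(x')})$. At $t=0$ one has $z'_*=x'$, $H=\mathrm{Id}$, $a_0(0,x',\xi')=1$, and $\varphi\equiv 1$ near $\{|\xi'|_{g(0,x')}=1\}$, so $\sigma_0(A_H)|_{t=0}\equiv 1$ on this set. By continuity in $t$, shrinking $\e_0$ if needed, $\sigma_0(A_H)\neq 0$ on $\{|\xi'|_{g(0,x')}=1\}$ for $0\leq t<\e_0$, giving the claimed ellipticity.

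The main obstacle is carrying out the above manipulations in the analytic category so that all remainders are exponentially small. This requires repeated contour deformations into the complex to avoid the singular locus $\xi'=0$ in the classical analytic symbol expansions and to handle the $|\xi|/C$-truncations in $b$ and in the amplitudes of $T,S$; the requisite machinery is developed in~\cite{Sj:82} and used in the closely related setting of~\cite{GaTo:19}.
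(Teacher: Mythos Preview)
Your outline is correct, and the key computation—that the $(z',\xi')$ critical point in $E_h\gamma_HP$ has $\xi'_*=\eta'$ \emph{exactly} (since $\partial_{z'}\Phi=(I+tA)(\eta'-\xi')$), so that the $\psi$ terms cancel and the critical value is $\langle x'-y',\eta'\rangle$ for all small $t$—is right and is in fact a clean observation.  The paper takes a different order of composition: it first forms $E_h\gamma_HPS\varphi(|\eta'|_g)$ and does analytic stationary phase in the three variables $(z',\omega',s')$, only afterwards composing with $T$.  The advantage of that route is that the FBI weight $e^{-\langle\eta'\rangle d(y',s')^2/(2h)}$ from $S$ both supplies a ready-made positive-imaginary-part direction for the good contour and gives the $|\eta'|>c>0$ localization needed to deform away from the singularity of the classical analytic amplitude at $\omega'=0$; the paper spells out the contour $\Gamma_r$ explicitly.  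In your two-variable version the Hessian at $t=0$ is $\begin{pmatrix}0&-I\\-I&0\end{pmatrix}$ (not $\mathrm{Id}$ as you wrote, though $|\det H|=1$ so the symbol value is unaffected), with zero imaginary part on the real contour, so the good-contour deformation is a bit less automatic—still doable by the standard $z'\mapsto z'+i\delta(\xi'-\eta')$, $\xi'\mapsto \xi'+i\delta(z'-x')$ device, but this is exactly the step you defer to the last paragraph.  Both approaches give the lemma; yours is shorter on paper, the paper's is more self-contained about the analytic stationary phase.
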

\begin{proof}
We start by computing the kernel of $E_h\gamma_HPS\varphi(|\eta'|_g):$
\begin{align*}
&[E_h \gamma_HPS\varphi(|\eta'|_g)] (x',y',\eta')\\
&=\frac{1}{(2\pi h)^{2n+3n/4}}\int e^{i(\langle x',\xi'\rangle -\psi(t,z',\xi')+\psi(t,z',\omega')-\langle s',\omega'\rangle-\langle \exp_{s'}^{-1}(y'),\eta'\rangle +\frac{i{\langle \eta'\rangle}}{2}d(y',s')^2)/h}\\
&\qquad\qquad a(t,z',\omega'/h)\varphi(|\xi'|_{g(x')})b(y',\eta',s')\varphi(|\eta'|_{g(y')})ds' d\omega'dz'd\xi'.
\end{align*}
We start by formally computing the critical points of the phase in $z',\omega',s' $. Let
$$
\Phi=\langle x',\xi'\rangle-\psi(t,z',\xi')+\psi(t,z',\omega')-\langle s',\omega'\rangle-\langle \exp_{s'}^{-1}(y'),\eta'\rangle +\frac{i{\langle \eta'\rangle}}{2}d(y',s')^2.
$$
Then, using Fermi normal coordinates centered around $y'$ in the $s'$-variables to compute
$$
\begin{gathered}
\partial_{z'}\Phi= \partial_{z'}(\psi(t,z',\omega')-\psi(t,z',\xi')),\qquad \partial_{\omega'}\Phi=\partial_{\omega'}\psi(t,z',\omega')-s',\\
\partial_{s'}\Phi=\eta'-\omega'+i\langle \xi'\rangle(s'-y').
\end{gathered}
$$
Now, observe that~\eqref{e:poissonPhase} implies that
\begin{equation}
\label{e:hessian}
\partial^2_{(z',\omega',s')}\Phi=\begin{pmatrix} 0&I&0\\I&0&-I\\0&-I&i{\langle\eta'\rangle}\end{pmatrix} + \begin{pmatrix} O(t)&O(t)&0\\O(t)&O(t)&0\\0&0&0\end{pmatrix} 
\end{equation}
and hence that the phase is non-degenerate for $t$ small. Moreover, by Taylor expansion,
$$
 \partial_{z'}\Phi = (I+ t  \, A(t,z',\omega',\xi'))( \omega'-\xi'),
$$
for some $A\in C^\infty$. In particular, denoting the critical points by $(\omega'_c,z'_c, s'_c)$, we have $\omega'_c=\xi'$ for $t$ small enough and hence $s'_c=y'+i{\langle \eta'\rangle^{-1}}(\eta'-\xi')$,  and $z'_c=s_c'+ O(t) $. Thus, 
\begin{align*}
\Phi(x,\xi',z'_c,\omega'_c,s'_c,y',\eta')&=\langle x',\xi'\rangle +\langle s'_c,\eta'-\xi'\rangle -\langle y',\eta'\rangle +\frac{i{\langle \eta'\rangle}}{2}d(y',s_c')^2\\
&=\langle x'-y',\xi'\rangle +\frac{i}{2{\langle\eta'\rangle}}(\xi'-\eta')^2.
\end{align*}
. 

{We will need to deform the contour to a good contour in order to perform complex stationary phase~\cite[Theorem 2.8]{Sj:82}. However, before doing this, we must show that the region near $\omega'=0$ can be neglected. Since the integrand is supported on $|\eta'|>c>0$, this can be done by deforming the contour in $s'$ alone by $s'\mapsto s'+i\delta \langle \eta'\rangle^{-1}(\omega'-\eta')$.}

{Next, we need to find a good contour for the phase. That is, we want to find a contour, $\Gamma$, on which the critical point, $\rho_c=(\omega'_c,z'_c,s'_c)$ lies and $\Im \Phi|_{\Gamma}(\rho)\geq c|\rho-\rho_c|^2$. To do this, we use the Hessian~\eqref{e:hessian} to choose a contour such that 
$$
\Im \langle \partial^2_{(z',\omega',s')}\Phi|_{\rho_c}\, v ,v\rangle \geq c|v|^2,\qquad v=\rho-\rho_c, \qquad \rho\in \Gamma.
$$}

For this, let $\chi \in C_c^\infty(\mathbb{R};[0,1])$ with $\chi \equiv 1$ near 0 and deform the contour to $\Gamma_1$, with $\Gamma_r$ defined for $r\in [0,1]$ by
\begin{multline*}
\Gamma_r:(z',\omega',s')\mapsto \Big( z'+r(z_c+\frac{i\delta \omega'}{\langle \omega'\rangle}),   \,\, \omega' +\xi'+i r\delta z',  \,\, s' +y'+ir{\langle\eta'\rangle^{-1}}(\eta'-\xi')\chi(\delta^{-1}{\langle\eta'\rangle^{-1}}|\eta'-\xi'|)\\+ir\delta(\frac{\eta'-\xi'}{|\eta'-\xi'|}(1-\chi(\delta^{-1}{\langle\eta'\rangle^{-1}}|\eta'-\xi'|))\Big).
\end{multline*}
First, note that for $|\omega'|\gg 1$, $r\in[0,1]$,  $\Im \Phi|_{\Gamma_r}\geq c|\omega'|$. Thus, the terms coming from infinity in $\omega'$ vanish and the contour deformation from $\Gamma_0=\mathbb{R}^{3n}$ to $\Gamma_1$ is justified. 

Moreover, for $|\xi'-\eta'|\leq\delta$, the phase satisfies 
\begin{multline*}
\Phi|_{\Gamma_1}= \langle x'-y',\xi'\rangle  +\frac{i}{2{\langle \eta'\rangle}}(\xi'-\eta')^2+\langle z'+\frac{i\delta \omega'}{\langle \omega\rangle},  \omega' +i \delta z'\rangle +\langle s', \omega'+i\delta z\rangle
+i\frac{{\langle \eta'\rangle}|s'|^2}{2}\\+ O(t(|\omega' +i \delta z'|^2+|z'+\frac{i\delta \omega'}{\langle \omega'\rangle}|^2)),
\end{multline*}
and for $|\xi'-\eta'|\geq \delta$, $\Im \Phi|_{\Gamma_1}\geq c\delta$. In particular, for $t\ll \delta$, $\Gamma_1$ is a good contour for $\Phi$ and we may apply the method of analytic stationary phase in $(z',\omega', w')$ to obtain
\begin{align*}
&[E_h \gamma_HPS\varphi(|\eta'|_g)] (x',y',\eta')\\
&=\frac{1}{(2\pi h)^{n+\frac{n}{4}}}\int e^{i[\langle x'-y',\xi'\rangle+\frac{i}{2{\langle\eta'\rangle}}(\xi'-\eta')^2]/h}(a(t,z'_c,\xi'/h)\varphi(|\xi'|_{g(x')})\varphi(|\eta'|_{g(x')})\bar{b}(y',\eta',w'_c) \\
&\qquad +O(h)_{C_c^\infty} )d\xi' +O(e^{-C/h}).\end{align*}
Here, we crucially use that $a(t,z',\omega'/h)$ is uniformly bounded with all derivatives when $|\omega|'>c>0$.

Finally, we precompose with $T$ to obtain the phase
$$
\tilde{\Phi}= \langle x'-w',\xi'\rangle+\frac{i}{2{\langle\eta'\rangle}}(\xi'-\eta')^2 +\frac{i{\langle\eta'\rangle}}{2}(w'-y')^2+\langle w'-y',\eta'\rangle
$$ and, using that $\mathbb{R}_{(w',\eta')}^{2n}$ is a good contour, we may perform (analytic) stationary phase in $w',\eta'$ to obtain
$$
[E_h \gamma_HPS\varphi(|\xi|_g)T] (x',y')
=\frac{1}{(2\pi h)^{n}}\int e^{i[\langle x'-y',\xi'\rangle]/h} \, \tilde{a}(x',y',\xi)  \, d\xi' +O(h^\infty)_{C^\infty},
$$
where $\tilde{a}\in C_c^\infty$ and $|\tilde{a}(x',x',\xi')|>c>0$ on $|\xi'|_g=1$. In particular, $E_h\gamma_HPS\varphi(|\xi'|_g)T$ is a semiclassical pseudodifferential operator of order zero which is elliptic as claimed. 
\end{proof}

\subsection{Lower bounds in a fixed non-geometric neighborhood: Proof of Theorem \ref{mainthm1}}

\begin{proof}
Recall from~\cite[Theorem 2]{GaTo:19} that, with 
$T$ and $S$ as in~\eqref{e:FBI} and~\eqref{e:FBI2} respectively, for $\varphi\in C_c^\infty((0,\infty))$ with $\varphi\equiv 1$ near $1$, 
$$
u|_{\partial\Omega}=S\varphi(|\xi'|_g)Tu|_{\partial\Omega}+O(e^{-C/h})_{L^2}.
$$
Next, 
$$
u|_{H}=\gamma_H(P+K_a)u|_{\partial\Omega}=\gamma_H(P+K_a)S\varphi(|\xi'|_g)Tu|_{\partial\Omega}+O(e^{-C/h})_{L^2}.
$$
Now, we have
\begin{align*}
&K_aS\varphi(|\xi'|_g)T\\
&=\frac{1}{(2\pi h)^{3n/2}}\int K_a(x',w)e^{\frac{i}{h}( \langle \exp_{z'}^{-1}(w'),\zeta'\rangle+\langle \exp_{y'}^{-1}(w'),\zeta'\rangle+\frac{i}{2}d(w',z')^2+\frac{i}{2}d(y',z'))}\varphi(|\zeta'|_{g(z')})dz'd\zeta'dw',
\end{align*}
Since $K_a$ is analytic, we may deforming the contour in $w'$ to $w'+i\delta\zeta'$, and use that $|\zeta'|>c>0$ to obtain
$$
K_aS\varphi(|\xi'|_g)T=O(e^{-C/h})_{L^2\to L^2},
$$
and hence
$$
u|_{H}=\gamma_HPS\varphi(|\xi'|_g)Tu|_{\partial\Omega}+O(e^{-C/h})_{L^2}.
$$

Now, note that $E_h$ naturally decomposes into a sum of operators acting on from $L^2(N_j)\to L^2(N_j)$ where $M=\sqcup N_j$ and $N_j$ are the connected components of $M$. We assume that $H_t=\varphi_{N_0}(N_0,t)$ with $\varphi_{N_0}$ as in~\eqref{e:phiDef} and write the component of $E_h$ acting on $N_0$ as $E_h^0$.
Therefore, with $E_h$ as in~\eqref{e:defInverse} and $\varphi\in C_c^\infty(0,1+\e)$ with $\varphi\equiv 1$ near $1$, we have by Lemma~\ref{l:itsAnInverse}
$$
E^0_h(u|_{H_t})  =  E^0_h \gamma_HPS\varphi(|\xi'|_g)Tu|_{\partial\Omega}   = Au|_{N_0}+E^0_hO(e^{-C/h})_{L^2}
$$
where $A$ is a semiclassical pseudodifferential operator on $\partial \Omega$ that is elliptic on 
$$
\{(x',\xi')\,:\, |\xi'|_{g(0,x')}=1\}.
$$
In particular, 
\begin{align*}
\|Au|_{N_0}\|&\leq \|E^0_h\|_{L^2\to L^2}(\|u|_{H_t}\|_{L^2(H_t)} +O(e^{-C/h}))\\
&\leq C_\e \sup_{x\in H_t,|\eta|_{g_{H_t}}\leq 1+\e}e ^{\Im\psi(x,\eta)/h}(\|u|_{H_t}\|_{L^2(H_t)}+O(e^{-C/h})_{L^2})
\end{align*}
Now, since 
$$
\operatorname{WF_h}(u|_{\partial\Omega})\subset S^*\partial\Omega,
$$
 and $A$ is elliptic on $S^*\partial \Omega,$ we have by e.g.~\cite[Theorem E.33]{DyZw:19}
$$
\|u\|_{L^2(N_0)}\leq C_\e\|Au\|_{L^2(N_0)}+O(h^\infty)\|u\|_{L^2(N_0)}.
$$
In particular, for $h$ small enough
$$
\|u\|_{L^2(N_0)}\leq C_\e\|Au\|_{L^2(N_0)}\leq C_\e \sup_{x\in H_t,|\eta'|_{g_{H_t}}\leq 1+\e}e ^{\Im \psi(x,\eta)/h}(\|u|_{H_t}\|_{L^2(H_t)}+O(e^{-C/h})_{L^2}).
$$
Therefore, for $t$ small enough, the proof is complete.
\end{proof}

\section{Carleman estimates under control assumptions: Proof of Theorem \ref{mainthm2}} \label{s:carleman}

Although the collar neighbourhood $U$ in which the Poisson representation in (\ref{poissonkernel}) is valid is fixed independent of $h$, the size of $U$ is difficult to make precise and could be quite small since it depends in a complicated fashion on the analyticity properies of $(\Omega,M,g)$. 
Our aim in the next section is to ``bootstrap" the lower bounds in Theorem \ref{mainthm1} further into the interior of $\Omega.$
To set notation, we let $N$ be a connected component of $\partial\Omega$ and $H=\varphi_N(N,t)$ with $0<t<r_{\max,N}$ and $\varphi_{N}$ as in~\eqref{e:phiDef}.
\begin{proof}
Let $(x',x_{n+1}): \Omega_{N} \to \R^{n} \times \R$ be the Fermi coordinates above adapted to the boundary component $N = \{ x_{n+1} = 0 \}$ and let $\Omega_{N}: = \{  0 \leq x_{n+1} < r_{\max,N} \}$ be the maximal  Fermi tube containing the hypersurface  $H_{\delta_0} = \{ x_{n+1} = \delta_0 \}$ with $\delta_0 < r_{\max,N}.$

\begin{rem} 
 We note here that  Fermi neighbourhood $\Omega_N$ of the boundary  depends only on the geometry of geodesic flow inside $\Omega$ and {\em not} on the analytic modulus of the data $(\partial \Omega, H, g).$ As such, it is often comparatively easy to determine the maximal tube width $\delta_0>0$ in Theorem \ref{mainthm1}. 
 \end{rem}

\subsection{Definition of cutoff functions}

 Let $\epsilon >0$ to be chosen small later such that 
 \begin{equation} \label{ep1}
 \epsilon < \min ( \frac{\ep_0}{4}, \delta_0, r_{\max,N}, - \delta_0),\end{equation}
 where we recall that $H_{\delta_0}= \{ x_{n+1} = \delta_0 \} \subset  \Omega_{N}(r_{\max,N})$ and $\ep_0 = \ep_0(M,\Omega,g)$ is the radius for which Theorem~\ref{mainthm1} holds.
 
   Let $\chi_{\epsilon}^+ \in C^{\infty}(\R;[0,1])$ with  
 
 $$ \text{supp} \, \partial \chi_{\epsilon}^+ , \subset \{ x \in \Omega_{N}; 0 < x_{n+1} < \epsilon \}$$ 
 and
 $$ \chi_{\epsilon}^+(x_{n+1}) = 1; \quad  x_{n+1} >  2 \epsilon ,\qquad\qquad \chi_{\epsilon}^+(x_{n+1}) = 0; \quad x_{n+1} <   \epsilon .$$ 
  
  We let $\chi_{\epsilon}^{-} \in C^{\infty}(\R;[0,1])$ be another cutoff be localized around the hypersurface $H = \{ x_{n+1} = \delta_0 \}$ with
 
  $$ \text{supp} \, \partial \chi_{\epsilon}^- , \subset \{  \delta_0 - \epsilon < x_{n+1} < \delta_0 + \epsilon \}$$ 
 and
 $$ \chi_{\epsilon}^-(x_{n+1}) = 1; \quad x_{n+1} < \delta_0 - 2 \epsilon ,\qquad\qquad \chi_{\epsilon}^-(x_{n+1}) = 0; \quad  x_{n+1} > \delta_0 + 2 \epsilon.$$ 
Finally, we set
\begin{equation} \label{cutoff}
\chi_{\epsilon}(x',x_{n+1}): = \chi_{\epsilon}^+(x_{n+1}) \cdot \chi_{\epsilon}^-(x_{n+1})   \in C^{\infty}_{0}(\R; [0,1]),\end{equation}\
where by Leibniz rule it follows that 
\begin{equation} \label{leib}
\text{supp} \,  \partial \chi_{\epsilon} \subset  \text{supp} \, \partial \chi_{\epsilon}^+ \cup \text{supp} \, \partial \chi_{\epsilon}^-  .
 \end{equation}\
Next, we set
$$ v_h:= \chi_{\epsilon} e^{\psi/h} u_h \in C^{\infty}_{0}(\Omega_{N}),$$
where $\psi \in C^{\infty}(\Omega_{N})$ is a weight function that is defined below. As usual, one then considers the conjugated operator $P_{\psi}(h):= e^{\psi/h} P(h) e^{-\psi/h} : C^{\infty}_{0}(\Omega_{N}) \to C^{\infty}_0(\Omega_{N})$ with principal symbol
$$p_{\psi}(x,\xi) = p(x, \xi + i \partial_{x} \psi ),$$
where $p(x,\xi) = |\xi|_g^2.$ In Fermi coordinates $(x',x_{n+1}): \Omega_{N} \to \R^{n+1},$
\begin{equation} \label{Fermi}
p(x,\xi) = \xi_n^2 + g(x,\xi'), \quad g(x,\xi') = g_{\partial}(x',\xi') + 2 x_{n+1} \kappa_{\partial}(x,\xi') \end{equation}
where $g$ is a quadratic form in $\xi'$. In the Taylor expansion (\ref{Fermi}), $g_{\partial}$ is the dual metric form on $T^*\partial \Omega$ induced from the interior and $\kappa_{\partial}(x',x_{n+1} =0,\xi')$ is the second fundamental form of the boundary.

\subsection{Carleman weight}

Fix $\delta>0$. We define the putative weight function to be 
\begin{equation} \label{weight}
\begin{gathered}
\psi_N(x_{n+1})=\int_0^{x_{n+1}}e^{\frac{1}{2}\int_0^sf_\delta (t) dt}ds \quad  x_{n+1} \in [0, r_{\max,N}],
\end{gathered}
\end{equation}
where $f_\delta\in C^\infty ([0,r_{\max,N}] )$ and satisfies
\begin{equation} \label{squeeze}
\begin{gathered}
\delta \leq f_\delta (t)- \sup_{ \{ (x',\xi'); g(t,x',\xi')=1 \} } \partial_{t} g(t,x',\xi')\, \leq 2\delta,
\end{gathered}
\end{equation}
so that 
\begin{equation}
\label{e:weightProp}
\begin{aligned}
( | \psi_N'(x_{n+1}) |^2 ) '&=f_\delta (x_{n+1}) \, (\psi_N'(x_{n+1}))^2\\
&\geq \Big( \sup_{ \{(x',\xi'); \,  g(x_{n+1},x',\xi') = 1 \}} \partial_{x_{n+1}}g(x_{n+1},x',\xi')\, \Big) \, (\psi_N'(x_{n+1}))^2 +\delta \, (\psi_N'(x_{n+1}))^2,\\
&=  \Big( \sup_{ \{(x',\xi');  \, g(x_{n+1},x',\xi') = |\psi_N '(x_{n+1})|^2 \}} \partial_{x_{n+1}}g(x_{n+1},x',\xi')\, \Big) \,  + \, \delta \, (\psi_N'(x_{n+1}))^2,\
\end{aligned}
\end{equation}
and
$$
\partial_{x_{n+1}}\psi_N|_{x_{n+1}=0}=1,\qquad  \partial_{x_n+1}^2\psi_N|_{x_{n+1}=0}=\tfrac{1}{2}f_\delta(0).
$$
The last line in (\ref{e:weightProp}) follows since $g(x,\xi')$ is quadratic in the fiber $\xi'$-variables.

%
%

To show that the function $\psi_N$ in (\ref{weight}) is a legitmate Carleman weight, we compute that  in Fermi coordinates $(x',x_{n+1}): \Omega_{N}(r_{\max,N}) \to \R^{n} \times \R,$
\begin{align*} p_{\psi_N}(x,\xi) &= ( \xi_{n+1} + i \partial_{x_{n+1}} \psi_N )^2 + g(x,\xi') \\
&=\xi_{n+1}^2+g(x,\xi')-(\partial_{x_{n+1}}\psi_N)^2+2i\xi_{n+1}\partial_{x_{n+1}}\psi_N.
\end{align*}

Since $\partial_{x_{n+1}}\psi_N\geq c>0$ it follows that
\begin{eqnarray} \label{char}
\text{Char}(p_{\psi_N})(x,\xi) =  \{ (x,\xi) \in T^* \Omega_{N};   g(x,\xi') =(\partial_{x_{n+1}}\psi_N)^2, \,\,\, \xi_{n+1} = 0 \} 
 \end{eqnarray}

Then, since $\Re p_{\psi_N} =  \xi_{n+1}^2 + g(x,\xi') - ( \partial_{x_{n+1}}\psi_N)^2$ and  $\Im p_{\psi_N} = 2 \partial_{x_{n+1}}\psi_N\xi_{n+1},$  a direct computation gives
\begin{equation} \label{comm}
\begin{aligned}
&\{ \Re p_{\psi_N}, \Im p_{\psi_N} \} (x,\xi)\\ &=  \big\{  \xi_{n+1}^2 + g(x,\xi') - ( \partial_{x_{n+1}}\psi_N)^2,  \,  2 \partial_{x_{n+1}}\psi_N\xi_{n+1} \big\}  \\
& = 2\partial_{x_{n+1}}\psi_N \cdot \Big( \partial_{x_{n+1}}[(\partial_{x_{n+1}}\psi_N)^2] - \partial_{x_{n+1}} g(x,\xi')  \Big) \end{aligned}\qquad(x,\xi) \in \text{Char}  \, p_{\psi_N}.
\end{equation}
 
 Then, since $\partial_{x_{n+1}}\psi_N\geq c \geq 0,$ it follows from~\eqref{e:weightProp} and~\eqref{comm} that
 
\begin{equation} \label{carlwt}
\{ \Re p_{\psi_N}, \Im p_{\psi_N} \} (x,\xi) \geq  C_1(\delta) | \partial_{x_{n+1}}  \psi_N |^2  \geq C_2(\delta) >0, \quad (x,\xi) \in \text{Char}(p_{\psi_N}). \end{equation} \\
Consequently $\psi_N$ is a legitimate Carleman weight in $\Omega_{N}(r_{\max,N})$ and so, by the subelliptic Carleman estimates (see e.g. \cite[Theorem 7.5]{Zw})
\begin{equation} \label{carleman}
\| P_{\psi_N}(h) v_h \|_{L^2(\Omega_{N} )}^2 \geq C h \, \| v_h \|_{H^1_h(\Omega_{N}) }^2. \end{equation}
\subsection{Lower bounds: completion of the proof}

Since $P(h) u_h = 0,$ it follows that $P_{\psi_N}(h) v_h = e^{\psi_N/h} [ P(h), \chi_{\epsilon}] u_h$. Since $[P(h), \chi_{\epsilon}]$ is an $h$-differential operator of order one supported in supp  $\partial \chi_{\epsilon},$  it follows from (\ref{carleman}) and (\ref{leib}), that with Carleman weight $\psi_N(x_{n+1})$ in (\ref{weight}),

\begin{eqnarray} \label{carleman2}
\| e^{\psi_N/h} [P(h),\chi_{\epsilon}] u_h \|^2_{\supp \partial \chi_{\epsilon}^{+}} + \| e^{\psi_N/h} [P(h),\chi_{\epsilon}] u_h \|^2_{\supp \partial \chi_{\epsilon}^{-}} 
  \geq C h \, \| e^{\psi_N/h} \chi_{\epsilon} u_h \|_{H_h^1(\Omega_{N})}^2. \end{eqnarray}\

%
%
%
%
  
 Since $\chi_{\epsilon}= 1$ on the set $\Gamma(\delta_0,\epsilon):= \{  2 \epsilon < x_{n+1} <  \delta_0 - 2\epsilon \},$ from (\ref{carleman2}),
 \begin{eqnarray} \label{carleman3}
\| e^{\psi_N /h} [P(h),\chi_{\epsilon}] u_h \|^2_{\supp \partial \chi_{\epsilon}^{+}} + \| e^{\psi_N /h} [P(h),\chi_{\epsilon}] u_h \|^2_{\supp \partial  \chi_{\epsilon}^{-}}  
  \geq C h \, \| e^{\psi_N /h}  u_h \|_{H^1_h( \Gamma(\delta_0,\epsilon) )}^2, \end{eqnarray}

 and so,
  \begin{eqnarray} \label{carleman4}
 h^2 \Big(  \| e^{\psi_N /h} u_h \|^2_{H^1_h( \supp \partial \chi_{\epsilon}^{+} ) } +  \| e^{\psi_N /h}  u_h \|^2_{ H^1_h( \supp \partial \chi_{\epsilon}^{-} ) }  \Big)  \geq C h  \| e^{\psi_N /h}  u_h \|_{H^1_h( \Gamma(\delta_0,\epsilon) )}^2. \end{eqnarray} \

To bound the first term on the LHS of (\ref{carleman4}) from above,  we recall  the upper bound from~\cite{GaTo:19}:
\begin{equation} \nonumber
|\partial_x^\alpha u_h(x)| \leq C_{\Omega,\alpha} \exp \,\big(  [  - x_{n+1} + C_{sup}(\Omega) x_{n+1}^2) ]/h\big){\|u\|_{L^2(N)}+Ce^{-1/Ch}}; \quad x \in \Omega_{N}(\ep_0).\end{equation}\
Since supp $\partial \chi_{\epsilon}^+ \subset \{ 0 < x_{n+1} < \epsilon   \}$  and $\epsilon < \epsilon_0,$ it follows that
\begin{align*} 
  h^2   \| e^{\psi_N /h} u_h \|^2_{H^1_h( \supp \partial \chi_{\epsilon}^{+} ) }  
 \leq  h^2C_{\Omega}^2\| e^{\psi_N /h} e^{[ -x_{n+1} + C_{sup}(\Omega) x_{n+1}^2)]/h} \|^2_{L^2 ( \{ 0 < x_{n+1} <  \epsilon \} )}\|u_h\|_{L^2(N)}^2+Ce^{-1/Ch}.\end{align*}
 
Using that $\psi_N(x_{n+1}) = x_{n+1} + \frac{1}{2}\psi_N'(0) x_{n+1}^2+O(x_{n+1}^3),$
$$\| e^{\psi_N /h} e^{[ -x_{n+1} + C_{sup}(\Omega) x_{n+1}^2)]/h} \|^2_{L^2 ( \{ 0 < x_{n+1} <  \epsilon \} )}\leq C_6 e^{  ( f_\delta(0) + C_{sup})  \epsilon^2 +C\e^3/ h},$$
and consequently, for the first term on the LHS of (\ref{carleman}),

\begin{equation} \label{small term}
 h^2   \| e^{\psi_N /h} u_h \|^2_{H^1_h( \supp \partial \chi_{\epsilon}^{+} ) } \leq C_{\Omega}^2 h^2 e^{  ( f_\delta(0) + C_{sup})  \epsilon^2 +C\e^3/ h}\|u_h\|_{L^2(N)}^2+Ce^{-1/Ch}.
  \end{equation} \

To estimate the RHS of (\ref{carleman4}) from below, we use the local $L^2$-restriction lower bounds in Theorem \ref{mainthm1} which gives that for  all $\tau \in [0,\ep_0] $ and any $\delta_1 >0,$ then, with $h \in (0,h_0(\delta_1,\e_0)],$
$$ \| u_h \|_{L^2( \{ x_{n+1} = \tau \} )} \geq C(\delta_1) e^{ -  [ \, (\tau + \frac{1}{4}f_0(0)\tau^2 +C\tau^3)(1+\delta_1)\, ] / h}\|u_h\|_{L^2(N)}-Ce^{-1/Ch}. $$\

Now, let $\e<\e_1\ll \min(\delta,\e_0)$. since $\{ \frac{\ep_1}{2} < x_{n+1} < \ep_1 \} \subset \Gamma(\delta_0,\ep),$ for the RHS in (\ref{carleman4}),
\begin{align*}
&\| e^{\psi_N /h}  u_h \|_{H^1_h( \Gamma(\delta_0,\epsilon) )}^2\\
& \geq  \| e^{\psi_N /h}  u_h \|_{L^2( \{ \frac{\ep_1}{2} < x_{n+1} < \ep_1\} }^2 \\
&\geq C(\delta_1) \int_{\tau = \frac{\ep_1}{2} }^{\tau =  \ep_1 } e^{ 2 \tau  + \frac{1}{2}f_\delta(0) \tau^2-C\tau^3  /h} e^{- (2 \tau  + \tfrac{1}{2}f_0(0) \tau^2 +C\tau^3)(1+\delta_1) ) /h} \, dt\|u_h\|^2_{L^2(N)}-Ce^{-1/Ch} \\
& = C(\delta_1)  \int_{\tau = \frac{\ep_1}{2} }^{\tau =  \ep_1 } e ^{\delta \tau^2 -C\tau^3-\delta_1(2\tau+\frac{1}{2}f_0(0)\tau^2) /h} \, d\tau \|u_h\|^2_{L^2(N)}-Ce^{-1/Ch} \\
&\geq C(\delta_1) \int_{\tau = \frac{\ep_1}{2} }^{\tau =  \ep_1 } e ^{  \frac{\delta}{2} \tau^2 /h} \, d\tau\|u_h\|^2_{L^2(N)}-Ce^{-1/Ch} ,\end{align*}
where the last line follows by choosing $\e_1\ll \delta$ and $\delta_1\ll \e_1\delta$ and noting that from (\ref{squeeze}), 
$f_{\delta}(0) - f_0(0) \geq \delta.$

Consequently, the end result is the following lower bound for the RHS in (\ref{carleman4}):
\begin{equation} \label{RHS}
\| e^{\psi_N /h}  u_h \|_{H^1_h( \Gamma(\delta_0,\epsilon) )}^2
\geq  C(\delta_1)  \e_1 e^{ \frac{\delta\e_1^2}{2h}}\|u_h\|^2_{L^2(N)}-e^{-1/Ch}. \end{equation}\
%
%
%
By possibly shrinking $\ep >0$ further so that $\e\ll \sqrt{\delta}\e_1$, it follows from the upper bound in (\ref{small term}) and the lower bound in (\ref{RHS}), that the first term on the LHS of (\ref{carleman4}) can  be absorbed in the RHS. Consequently,
  \begin{eqnarray} \label{carleman5}
  \| e^{\psi_N /h}  u_h \|^2_{ H^1_h( \supp \partial \chi_{\epsilon}^{-} ) }    \geq C(\ep)\|u_h\|^2_{L^2(N)}-Ce^{-1/Ch}, \end{eqnarray}
 and  that, together with elliptic regularity, completes the proof of Theorem \ref{mainthm2}.

\end{proof}

\section{Eigenfunction goodness estimates for Cauchy data: Proof of Theorem \ref{mainthm3}} \label{upper}



\begin{proof}  
Let $0\leq t<r_{\max,\partial\Omega}$ and $H_t:=\varphi_{\partial\Omega}(\partial\Omega,t)$ so that in Fermi coordinates $(x',x_{n+1})$ around $\partial \Omega$, 
$$
H_t=\{(x',t)\}.
$$
Let $U_{H_t} \subset \mathring{\Omega}$ be the domain interior to $\Omega$ bounded by $H_t$ and for any fixed $\ep>0,$  let $U_{H_t,\ep} \Subset U_{H_t}$ be a compact manifold with boundary, $H_{t,\ep},$ strictly contained in $U_{H_t}$ with  $\frac{\ep}{2} \leq d(H_{t,\ep}, H_t) \leq \max_{(x,y) \in H_{t,\ep} \times H_t}  d(x,y)  \leq \ep.$
Then, by e.g.~\cite[Chapter 4]{Au:82}, there exists a Green's function $G \in {\mathcal D}'(U_{H_t}  \times U_{H_t})$ satisfying
$$ -\Delta_x G(x,y) = \delta_x(y), \quad (x,y) \in U_{H_t} \times  U_{H_t},$$
with $G(\cdot, \cdot) \in C^{\infty}( (U_{H_t} \times  U_{H_t}) \setminus \{ x=y \} ).$  

Then, for $x \in U_{H_t,\ep} $ an application of Green's formula gives
\begin{equation} \label{green}
h u_h(x) = \int_{H_t} G(x,s) h \partial_{\nu} u_h(s) d\sigma(s)  - h \int_{H_t} \partial_{\nu(s)} G(x,s) u_h(s) d\sigma(s). \end{equation} 

Since $d(x,H_t) > \ep$ when $x \in U_{H_t,\ep}$ and so, $G \in C^{\infty}(U_{H_t,\ep}, H_t),$ differentiation of (\ref{green}) in the $x$-variables gives a similar formula for the derivatives $\partial_{x_k} u_h.$
An application of Cauchy-Schwarz then implies that

\begin{equation} \label{pl2}
h^2 \| u_h \|_{ H^1_h (U_{H_t,\ep}) }^2 \leq C(\ep) \Big(  \| u_h \|_{L^2(H_{t})}^2 + \| h \partial_{\nu} u_h \|_{L^2(H_t)}^2 \Big).
\end{equation} \

Finally, by Theorem \ref{mainthm2}, applied with $H_{t,\ep}=\{d(x,\partial\Omega)=t+\e\}$, for any $\e>0$, 
$$
\|u\|_{H^1_h(U_{H_t,\ep})}\geq C_\e e^{(-\psi_{\partial\Omega}(t)-\e)/h},
$$
which completes the proof.

\end{proof}

%
%
%
%
%
%
%
%
%

\bibliography{biblio}
\bibliographystyle{alpha}

\end{document}